\newcommand{\SetR}{{\mathbb{R}}}
\newcommand{\SetN}{{\mathbb{N}}}
\newcommand{\optop}[1]{{\mathrm{opt}}_{#1}}
\newcommand{\opts}[2]{\optop{#1}\left(#2\right)}
\newcommand{\UpSet}[3]{{}{\uparrow_{#1}^{#2}\!\!#3}}
\newcommand{\IsALinPrev}{\mathbf{E}}
\newcommand{\IsACohLowPrev}{\underline{\mathbf{E}}}
\newcommand{\IsACohUppPrev}{\overline{\mathbf{E}}}
\newcommand{\Prev}[1]{\mathit{#1}}
\newcommand{\LowPrev}[1]{\protect\underline{\Prev{#1}}}
\newcommand{\UppPrev}[1]{\protect\overline{\Prev{#1}}}
\newcommand{\LinNatXt}[1]{{\IsALinPrev}_{#1}}
\newcommand{\LowNatXt}[1]{{\IsACohLowPrev}_{#1}}
\newcommand{\UppNatXt}[1]{{\IsACohUppPrev}_{#1}}
\newcommand{\sggen}{>} % generic strict partial order
\newcommand{\Psg}[1]{\sggen_{#1}} % (P(X-Y)>0 or X\ge Y) and X\neq Y
\newcommand{\Pintvalsg}[1]{\sqsupset_{#1}}
\newcommand{\maxs}[2]{\mathrm{max}_{#1}\left(#2\right)}
\newcommand{\ud}{\mathrm{d}}
\newcommand{\gambles}{\mathcal{L}(\mathcal{X})}
\theoremstyle{plain}
\newtheorem{thm}{Theorem}
\newtheorem{lem}[thm]{Lemma}
\theoremstyle{definition}
\newtheorem{defn}[thm]{Definition}
\begin{document}

\title{Decision Making under Uncertainty using Imprecise Probabilities}
\author{Matthias C. M. Troffaes}
\address{Carnegie Mellon University, Department of Philosophy, Baker Hall 135, Pittsburgh, PA 15213, US}
\email{matthias.troffaes@gmail.com}

\begin{abstract}
  Various ways for decision making with imprecise
  probabilities---admissibility, maximal expected utility, maximality,
  E-admissibility, $\Gamma$-maximax, $\Gamma$-maximin, all of which
  are well-known from the literature---are discussed and compared. We
  generalize a well-known sufficient condition for existence of
  optimal decisions. A simple numerical example shows how these
  criteria can work in practice, and demonstrates their differences.
  Finally, we suggest an efficient approach to calculate optimal
  decisions under these decision criteria.
\end{abstract}

\keywords{decision, optimality, uncertainty, probability, maximality, E-admissibility, maximin, lower prevision}

\maketitle
\thispagestyle{fancy}

% main text
\section{Introduction}

Often, we find ourselves in a situation where we have to make some
decision $d$, which we may freely choose from a set $D$ of available
decisions. Usually, we do not choose $d$ arbitrarily in $D$: indeed,
we wish to make a decision that performs best according to some
criterion, i.e., an \emph{optimal} decision. It is commonly
assumed that each decision $d$ induces a real-valued gain $J_d$: in
that case, a decision $d$ is considered optimal in $D$ if it induces
the highest gain among all decisions in $D$. This holds for instance
if each decision induces a lottery over some set of rewards, and these
lotteries form an ordered set satisfying the axioms of von Neumann
Morgenstern \cite{1944:neumann}, or more generally, the axioms of for
instance Herstein and Milnor \cite{1953:herstein:milnor}, if we wish
to account for unbounded gain.

So, we wish to identify the set $\opts{}{D}$ of all decisions that
induce the highest gain. Since, at this stage, there is no uncertainty regarding the
gains $J_d$, $d\in D$, the solution is simply
\begin{equation}
  \label{eq:optimal:simple}
  \opts{}{D}=\arg\max_{d\in D}J_d.
\end{equation}
Of course, $\opts{}{D}$ may be empty; however, if the
set $\{J_d\colon d\in D\}$ is a compact subset of $\SetR$---this
holds for instance if $D$ is finite---then $\opts{}{D}$ contains at
least one element. Secondly, note that even if $\opts{}{D}$ contains
more than one decision, all decisions $d$ in $\opts{}{D}$ induce the same
gain $J_d$; so, if, in the end, the gain is all that matters, it
suffices to identify only one decision $d^*$ in $\opts{}{D}$---often,
this greatly simplifies the analysis.

However, in many situations, the gains $J_d$ induced by decisions $d$
in $D$ are influenced by variables whose values are uncertain.
Assuming that these variables can be modelled through a random
variable $X$ that takes values in some set $\mathcal{X}$ (the
\emph{possibility space}), it is customary to consider the gain $J_d$
as a so-called \emph{gamble} on $X$, that is, we view $J_d$ as a
real-valued gain that is a bounded function of $X$, and that is expressed in a
fixed state-independent utility scale. So, $J_d$ is a bounded
$\mathcal{X}$--$\SetR$-mapping, interpreted as an uncertain gain:
taking decision $d$, we receive an amount of utility $J_d(x)$ when $x$
turns out to be the realisation of $X$. For the sake of simplicity, we
shall assume that the outcome $x$ of $X$ is independent of the
decision $d$ we take: this is called \emph{act-state independence}.
What decision should we take?

Irrespective of our beliefs about $X$, a decision $d$ in $D$ is not
optimal if its gain gamble $J_d$ is \emph{point-wise dominated} by a
gain gamble $J_e$ for some $e$ in $D$, i.e., if there is an $e$
in $D$ such that $J_e(x)\ge J_d(x)$ for all $x\in\mathcal{X}$ and
$J_e(x)>J_d(x)$ for at least one $x\in\mathcal{X}$: choosing $e$
guarantees a higher gain than choosing $d$, possibly strictly higher,
regardless of the realisation of $X$. So, as a first selection, let us
remove all decisions from $D$ whose gain gambles are point-wise
dominated (see Berger \cite[Section~1.3.2, Definition~5~ff.,
p.~10]{1985:berger}):
\begin{equation}
  \label{eq:optimal:pointwise}
  \opts{\ge}{D}:=
  \{d\in D\colon (\forall e\in D)(J_e\not\ge J_d\text{ or }J_e=J_d)\}
\end{equation}
where $J_e\ge J_d$ is understood to be
point-wise, and $J_e\not\ge J_d$ is understood to be the negation of
$J_e\ge J_d$.
The decisions in $\opts{\ge}{D}$ are called \emph{admissible}, the
other decisions in $D$ are called \emph{inadmissible}. Note that we
already recover Eq.~\eqref{eq:optimal:simple} if there is no
uncertainty regarding the gains $J_d$, i.e., if all $J_d$ are
constant functions of $X$. When do admissible decisions exist? The set
$\opts{\ge}{D}$ is non-empty if $\{J_d\colon d\in D\}$ is a non-empty
and weakly compact subset of the set $\gambles$ of all gambles on
$\mathcal{X}$ (see Theorem~\ref{thm:existence:admissible} further on).
Note that this condition is sufficient, but not necessary.

In what follows, we shall try to answer the following question: given
additional information about $X$, how can we further reduce the set
$\opts{\ge}{D}$ of admissible decisions? The paper is structured as
follows. Section~\ref{sec:meu:questionmark} discusses the classical
approach of maximising expected utility, and explains why it is not
always a desirable criterion for selecting optimal decisions. Those
problems are addressed in Section~\ref{sec:generalising:to:improb},
discussing alternative approaches to deal with uncertainty and
optimality, all of which attempt to overcome the issues raised in
Section~\ref{sec:meu:questionmark}, and all of which are known from
the literature. Finally, Section~\ref{sec:the:right:one} compares
these alternative approaches, and explains how optimal decisions can
be obtained in a computationally efficient way. A few technical
results are deferred to the appendix, where we, among other things,
generalize a well-known technical condition on the existence of
optimal decisions.

\section{Maximising Expected Utility?}
\label{sec:meu:questionmark}

In practice, beliefs about $X$ are often modelled by a (possibly
finitely additive) probability measure $\mu$ on a field $\mathcal{F}$
of subsets of $\mathcal{X}$, and one then arrives at a set of optimal
decisions by maximising their expected utility with respect to $\mu$;
see for instance Raiffa and Schlaifer \cite[Section~1.1.4,
p.~6]{1961:raiffa:schlaifer}, Levi \cite[Section~4.8, p.~96,
ll.~23--26]{1983:levi}, or Berger \cite[Section~1.5.2, Paragraph~I,
p.~17]{1985:berger}. Assuming that the field $\mathcal{F}$ is
sufficiently large such that the gains $J_d$ are measurable with
respect to $\mathcal{F}$---this means that every $J_d$ is a uniform limit of
$\mathcal{F}$-simple gambles---the \emph{expected utility} of the gain
gambles $J_d$ is given by:
\begin{equation*}
  \LinNatXt{\mu}(J_d):=\int J_d\ud\mu,
\end{equation*}
where we take for instance the Dunford integral on the right hand
side; see Dunford \cite[p.~443, Sect.~3]{1935:dunford}, and Dunford
and Schwartz \cite[Part~I, Chapter~III, Definition~2.17,
p.~112]{1957:dunford:schwartz}---this linear
integral extends the usual textbook integral (see for instance Kallenberg \cite[Chapter~1]{2002:kallenberg}) to case where $\mu$ is not 
$\sigma$-additive.
Recall that we have assumed act-state independence: $\mu$ is
independent of $d$.

As far as it makes sense to rank decisions
according to the expected utility of their gain gambles, we should
\emph{maximise expected utility}:
\begin{equation}
  \label{eq:optimal:meu}
  \opts{\LinNatXt{\mu}}{D}:=\arg\max_{d\in\opts{\ge}{D}}\LinNatXt{\mu}(J_d).
\end{equation}

When do optimal solutions exist?  The set $\opts{\LinNatXt{\mu}}{D}$
is guaranteed to be non-empty if $\{J_d\colon d\in D\}$ is a non-empty
and compact subset of the set $\gambles$ of all gambles on
$\mathcal{X}$, with respect to the supremum norm. Actually, this
technical condition is sufficient for existence with regard to all of
the optimality conditions we shall discuss further on. Therefore,
without further ado, we shall assume that $\{J_d\colon d\in D\}$ is
non-empty and compact with respect to the supremum norm. A slightly
weaker condition is assumed in Theorem~\ref{thm:existence}, in the
appendix of this paper.

Unfortunately, it may happen that our beliefs about $X$ cannot be
modelled by a probability measure, simply because we have insufficient
information to identify the probability $\mu(A)$ of every
event $A$ in $\mathcal{F}$. In such a situation, maximising expected
utility usually fails to give an adequate representation of optimality.

For example, let $X$ be the unknown outcome of the tossing of a coin;
say we \emph{only} know that the outcome will be either heads or tails
(so $\mathcal{X}=\{H,T\}$), and that the probability of heads lays
between $28\%$ and $70\%$. Consider the decision set $D=\{1,2,3,4,5,6\}$
and the gain gambles
\begin{align*}
  J_1(H)&=4, & J_1(T)&=0, \\
  J_2(H)&=0, & J_2(T)&=4, \\
  J_3(H)&=3, & J_3(T)&=2, \\
  J_4(H)&=\tfrac{1}{2}, & J_4(T)&=3, \\
  J_5(H)&=\tfrac{47}{20}, & J_5(T)&=\tfrac{47}{20}, \\
  J_6(H)&=\tfrac{41}{10}, & J_6(T)&=-\tfrac{3}{10}, \\
\end{align*}
Clearly, $\opts{\ge}{D}=\{1,2,3,4,5,6\}$, and
\begin{equation*}
\opts{\LinNatXt{\mu}}{D}
=
\begin{cases}
\{2\},   &\text{if }\mu(H)<\frac{2}{5}, \\
\{2,3\}, &\text{if }\mu(H)=\frac{2}{5}, \\
\{3\},   &\text{if }\frac{2}{5}<\mu(H)<\frac{2}{3}, \\
\{1,3\}, &\text{if }\mu(H)=\frac{2}{3}, \\
\{1\},   &\text{if }\mu(H)>\frac{2}{3}.
\end{cases}
\end{equation*}
Concluding, if we have \emph{no} additional information about $X$, but
still insist on using a particular (and necessarily arbitrary) $\mu$,
which is only required to satisfy $0.28\le
\mu(H)\le 0.7$, we find that $\opts{\LinNatXt{\mu}}{D}$ is not
very robust against changes in $\mu$.
This 
shows that maximising expected utility fails to give an adequate
representation of optimality in case of ignorance about the precise
value of $\mu$.

\section{Generalising to Imprecise Probabilities}
\label{sec:generalising:to:improb}

Of course, if we have sufficient information such that
$\mu$ can be identified, nothing is wrong with
Eq.~\eqref{eq:optimal:meu}. We shall therefore try to generalise
Eq.~\eqref{eq:optimal:meu}. In doing so, following Walley \cite{1991:walley}, we shall assume that our
beliefs about $X$ are modelled by a real-valued mapping $\LowPrev{P}$
defined on a---possibly only very small---set $\mathcal{K}$ of
gambles, that represents our assessment of the \emph{lower expected
  utility} $\LowPrev{P}(f)$ for each gamble $f$ in
$\mathcal{K}$;\footnote{The upper expected utility of a gamble $f$ is
  $\UppPrev{P}(f)$ if and only if the lower expected utility of $-f$
  is $-\UppPrev{P}(f)$. So, for any gamble $f$ in $\mathcal{K}$,
  $\LowPrev{P}(-f)=-\UppPrev{P}(f)$, and therefore, without loss of
  generality, we can restrict ourselves to lower expected utility.}
note that $\mathcal{K}$ can be chosen empty if we are completely
ignorant. Essentially, this means that instead of a single probability
measure on $\mathcal{F}$, we now identify a closed convex
set $\mathcal{M}$ of finitely additive probability measures $\mu$ on
$\mathcal{F}$, described by the linear inequalities
\begin{equation}
  \label{eq:credalset}
  (\forall f\in\mathcal{K})(\LowPrev{P}(f)\le\LinNatXt{\mu}(f)).
\end{equation}
We choose the domain $\mathcal{F}$ of the measures $\mu$ sufficiently
large such that all gambles of interest, in particular those in
$\mathcal{K}$ and the gain gambles $J_d$, are measurable with respect
to $\mathcal{F}$. Without loss of generality,
we can assume $\mathcal{F}$ to be the power set of $\mathcal{X}$,
although in practice, it may be more convenient to choose a smaller
field.

For a given $\mathcal{F}$-measurable gamble $g$, not necessarily in $\mathcal{K}$, we may also
derive a lower expected utility $\LowNatXt{\LowPrev{P}}(g)$ by
minimising $\LinNatXt{\mu}(g)$ subject to the above constraints, and
an upper expected utility
$\UppNatXt{\LowPrev{P}}(g)=-\LowNatXt{\LowPrev{P}}(-g)$ by maximising
$\LinNatXt{\mu}(g)$ over the above constraints. In case $\mathcal{X}$ and
$\mathcal{K}$ are finite, this simply amounts to solving a linear
program.

In the literature, $\mathcal{M}$ is called a
\emph{credal set} (see for instance Giron and Rios
\cite{1980:giron:rios::quasi:bayesians}, and Levi \cite[Section~4.2,
pp.~76--78]{1983:levi}, for more comments on this model), and
$\LowPrev{P}$ is called
a \emph{lower prevision} (because they generalise the previsions,
which are fair prices, of De Finetti \cite[Vol.~I, Section~3.1,
pp.~69--75]{1974:definetti}).

The mapping $\LowNatXt{\LowPrev{P}}$ obtained, corresponds exactly to
the so-called \emph{natural extension} of $\LowPrev{P}$ (to the set of
$\mathcal{F}$-measurable gambles), where $\LowPrev{P}(f)$ is
interpreted as a supremum buying price for $f$ (see Walley
\cite[Section~3.4.1, p.~136]{1991:walley}). In this interpretation,
for any $s<\LowPrev{P}(f)$, we are willing to pay any utility
$s<\LowPrev{P}(f)$ prior to observation of $X$, if we are guaranteed
to receive $f(x)$ once $x$ turns out to be the outcome of $X$. The
natural extension then corresponds to the highest price we can obtain
for an arbitrary gamble $g$, taken into account the assessed prices
$\LowPrev{P}(f)$ for $f\in\mathcal{K}$. Specifically,
\begin{equation}
  \label{eq:natxt}
  \LowNatXt{\LowPrev{P}}(g)=
  \sup\left\{
  \alpha+\sum_{i=1}^n\lambda_i\LowPrev{P}(f_i)
  \colon
  \alpha+\sum_{i=1}^n\lambda_i f_i\le g\right\},
\end{equation}
where $\alpha$ varies over $\SetR$, $n$ over $\SetN$, $\lambda_1$,
\dots, $\lambda_n$ vary over $\SetR^+$, and $f_1$, \dots, $f_n$ over
$\mathcal{K}$.

It may happen that $\mathcal{M}$ is empty, in which case
$\LowNatXt{\LowPrev{P}}$ is undefined (the supremum in
Eq.~\eqref{eq:natxt} will always be $+\infty$). This occurs exactly
when $\LowPrev{P}$ \emph{incurs a sure loss} as a lower prevision,
that is, if we can find a finite collection of gambles $f_1$, \dots,
$f_n$ in $\mathcal{K}$ such that
$\sum_{i=1}^n\LowPrev{P}(f_i)>\sup\left[\sum_{i=1}^n f_i\right]$,
which means that we are willing to pay more for this collection than
we can ever gain from it, which makes no sense of course.

Finally, it may happen that $\LowNatXt{\LowPrev{P}}$ does not coincide
with $\LowPrev{P}$ on $\mathcal{K}$. This points to a form of
\emph{incoherence} in $\LowPrev{P}$: this situation occurs exactly
when we can find a finite collection of gambles $f_0$, $f_1$, \dots,
$f_n$ and non-negative real numbers $\lambda_1$, \dots, $\lambda_n$,
such that 
\[
\alpha+\sum_{i=1}^n\lambda_i f_i\le f_0,
\text{ but also }
\LowPrev{P}(f_0)
<
\alpha+\sum_{i=1}^n\lambda_i\LowPrev{P}(f_i).
\]
This means that we can construct a price for $f_0$, using the assessed
prices $\LowPrev{P}(f_i)$ for $f_i$, which is strictly higher than
$\LowPrev{P}(f_0)$. In this sense, $\LowNatXt{\LowPrev{P}}$ corrects
$\LowPrev{P}$, as is apparent from Eq.~\eqref{eq:natxt}.

Although the belief model described above is not the most general we
may think of, it is sufficiently general to model both expected
utility and complete ignorance: these two extremes are obtained by
taking $\mathcal{M}$ either equal to a singleton, or equal to the set
of all finitely additive probability measures on $\mathcal{F}$ (i.e.,
$\mathcal{K}=\emptyset$). It also allows us to demonstrate the
differences between different ways to make decisions with
imprecise probabilities on the example we presented before.

In that example, the given information can be modelled by, say, a
lower prevision $\LowPrev{P}$ on $\mathcal{K}=\{I_H,-I_H\}$, defined
by $\LowPrev{P}(I_H)=0.28$ and
$\LowPrev{P}(-I_H)=-0.7$, where $I_H$ is the gamble defined
by $I_H(H)=1$ and $I_H(T)=0$. For this $\LowPrev{P}$, the set
$\mathcal{M}$ corresponds exactly to the set of all probability measures $\mu$
on $\mathcal{F}=\{\emptyset,\{H\},\{T\},\{H,T\}\}$, such that $0.28\le \mu(H)\le 0.7$. We also easily
find for any gamble $f$ on $X$ that
\begin{equation*}
  \LowNatXt{\LowPrev{P}}(f)=
  \min\{0.28 f(H) + 0.72 f(T), 0.7 f(H) + 0.3 f(T)\}.
\end{equation*}

\subsection{$\Gamma$-Maximin and $\Gamma$-Maximax}

As a very simple way to generalise Eq.~\eqref{eq:optimal:meu}, we
could take the lower expected utility $\LowNatXt{\LowPrev{P}}$ as a
replacement for the expected utility $\LinNatXt{\mu}$ (see for instance
Gilboa and Schmeidler \cite{1989:gilboa::maximin}, or Berger
\cite[Section~4.7.6, pp.~215--223]{1985:berger}):
\begin{equation}
  \opts{\LowNatXt{\LowPrev{P}}}{D}
  :=
  \arg\max_{d\in\opts{\ge}{D}}\LowNatXt{\LowPrev{P}}(J_d);
\end{equation}
this criterion is called \emph{$\Gamma$-maximin}, and amounts to
worst-case optimisation: we take a decision that maximises the worst
expected gain. For example, if we consider the decision as a game
against nature, who is assumed to choose a distribution in
$\mathcal{M}$ aimed at minimizing our expected gain, then the
$\Gamma$-maximin solution is the best we can do. Applied on the
example of Section~\ref{sec:meu:questionmark}, we find as a solution
$\opts{\LowNatXt{\LowPrev{P}}}{D}=\{5\}$.

In case $\mathcal{K}=\emptyset$, i.e., in case of complete ignorance about $X$,
it holds that $\LowNatXt{\LowPrev{P}}(f)=\inf_{x\in\mathcal{X}}f(x)$.
Hence, in that case, $\Gamma$-maximin coincides with maximin (see
Berger \cite[Eq.~(4.96), p.~216]{1985:berger}), ranking
decisions by the minimal (or infimum, to be more precise) value of
their gain gambles.

Some authors consider best-case optimisation, taking a decision that
maximises the best expected gain (see for instance Satia and Lave
\cite{1973:satia}). In our example, the ``$\Gamma$-maximax'' solution is
$\opts{\UppNatXt{\LowPrev{P}}}{D}=\{2\}$.

\subsection{Maximality}

Eq.~\eqref{eq:optimal:meu} is essentially the result of pair-wise
preferences based on expected utility: defining the strict partial
order $\Psg{\mu}$ on $D$ as $d\Psg{\mu}e$ whenever
$\LinNatXt{\mu}(J_d)>\LinNatXt{\mu}(J_e)$, or equivalently, whenever
$\LinNatXt{\mu}(J_d-J_e)>0$, we can simply write
\begin{equation*}
  \opts{\LinNatXt{\mu}}{D}=\maxs{\Psg{\mu}}{\opts{\ge}{D}},
\end{equation*}
where the operator $\maxs{\Psg{\mu}}{\cdot}$ selects the
$\Psg{\mu}$-maximal, i.e., the $\Psg{\mu}$-undominated
elements from a set with strict partial order $\Psg{\mu}$.

Using the supremum buying price interpretation, it is easy to derive
pair-wise preferences from $\LowPrev{P}$: define $\Psg{\LowPrev{P}}$
as $d\Psg{\LowPrev{P}}e$ whenever $\LowNatXt{\LowPrev{P}}(J_d-J_e)>0$.
Indeed, $\LowNatXt{\LowPrev{P}}(J_d-J_e)>0$ means that we are disposed
to pay a strictly positive price in order to take decision $d$ instead
of $e$, which clearly indicates strict preference of $d$ over $e$ (see
Walley \cite[Sections~3.9.1--3.9.3, pp.~160--162]{1991:walley}). Since
$\Psg{\LowPrev{P}}$ is a strict partial order, we arrive at
\begin{align}
  \nonumber \opts{\Psg{\LowPrev{P}}}{D}:=&\maxs{\Psg{\LowPrev{P}}}{\opts{\ge}{D}}
  \\ =&
  \label{eq:optimal:maximality}
  \{d\in\opts{\ge}{D}\colon
  (\forall e\in\opts{\ge}{D})(\LowNatXt{\LowPrev{P}}(J_e-J_d)\le 0)\}
\end{align}
as another generalisation of Eq.~\eqref{eq:optimal:meu}, called
\emph{maximality}. Note that $\Psg{\LowPrev{P}}$ can also be viewed as
a robustification of $\Psg{\mu}$ over $\mu$ in $\mathcal{M}$.  Applied
on the example of Section~\ref{sec:meu:questionmark}, we find
$\opts{\Psg{\LowPrev{P}}}{D}=\{1,2,3,5\}$ as a solution.

Note that Walley \cite[Sections~3.9.2, p.~161]{1991:walley} has a slightly different definition: instead of working from the set of admissible
decisions as in Eq.~\eqref{eq:optimal:maximality}, Walley starts with ranking $d>e$ if
$\LowNatXt{\LowPrev{P}}(J_d-J_e)>0$ or ($J_d\ge J_e$ and $J_d\neq
J_e$), and then selects those decisions from $D$ that are maximal with
respect to this strict partial order. Using
Theorem~\ref{thm:existence:admissible}
from the appendix, it is easy to show that Walley's definition of
maximality coincides with the one given in
Eq.~\eqref{eq:optimal:maximality} whenever the set $\{J_d\colon d\in
D\}$ is weakly compact. This is something we usually assume to ensure the existence of admissible elements; in particular, weak compactness is assumed in Theorem~\ref{thm:existence} (see appendix).
The benefit of Eq.~\eqref{eq:optimal:maximality} over Walley's
definition is that Eq.~\eqref{eq:optimal:maximality} is easier to manage in the proofs in the appendix.

\subsection{Interval Dominance}

Another robustification of $\Psg{\mu}$ is the strict partial ordering
$\Pintvalsg{\LowPrev{P}}$ defined by $d\Pintvalsg{\LowPrev{P}}e$
whenever $\LowNatXt{\LowPrev{P}}(J_d)>\UppNatXt{\LowPrev{P}}(J_e)$;
this means that the interval $[\LowNatXt{\LowPrev{P}}(J_d),\UppNatXt{\LowPrev{P}}(J_d)]$ is completely on the right hand side of the interval $[\LowNatXt{\LowPrev{P}}(J_e),\UppNatXt{\LowPrev{P}}(J_e)]$.
The above ordering is therefore called \emph{interval dominance} (see Zaffalon,
Wesnes, and Petrini \cite[Section~2.3.3,
pp.~68--69]{2003:zaffalon::dementia} for a brief discussion and
references).
\begin{align}
  \nonumber \opts{\Pintvalsg{\LowPrev{P}}}{D}:=&\maxs{\Pintvalsg{\LowPrev{P}}}{\opts{\ge}{D}}
  \\ =&
  \label{eq:optimal:intval}
  \{d\in\opts{\ge}{D}\colon
  (\forall e\in\opts{\ge}{D})(\LowNatXt{\LowPrev{P}}(J_e)\le\UppNatXt{\LowPrev{P}}(J_d))\}
\end{align}
The resulting notion
is weaker than maximality: applied on the example of
Section~\ref{sec:meu:questionmark},
$\opts{\Pintvalsg{\LowPrev{P}}}{D}=\{1,2,3,5,6\}$, which is strictly
larger than $\opts{\Psg{\LowPrev{P}}}{D}$.

\subsection{E-Admissibility}

In the example of Section~\ref{sec:meu:questionmark}, we have shown
that $\opts{\LinNatXt{\mu}}{D}$ may not be very robust against changes
in $\mu$. Robustifying $\opts{\LinNatXt{\mu}}{D}$ against
changes of $\mu$ in $\mathcal{M}$, we arrive at
\begin{equation}\label{eq:eadmissible}
  \opts{\mathcal{M}}{D}
  :=
  \bigcup_{\mu\in\mathcal{M}}
  \opts{\LinNatXt{\mu}}{D};
\end{equation}
this provides another way to generalise Eq.~\eqref{eq:optimal:meu}.
The above criterion selects those admissible decisions in $D$ that
maximize expected utility with respect to at least one $\mu$ in
$\mathcal{M}$; i.e., they select the \emph{E-admissible} (see Good
\cite[p.~114, ll.~8--9]{1952:good}, or Levi \cite[Section~4.8, p.~96,
ll.~8--20]{1983:levi}) decisions among the admissible ones. We find
$\opts{\mathcal{M}}{D}=\{1,2,3\}$ for the example.

In case $\mu$ is defined on $\wp(\mathcal{X})$ and $\mu(\{x\})>0$ for
all $x\in\mathcal{X}$, then every E-admissible decision is also
admissible, and hence, in that case, $\opts{\mathcal{M}}{D}$ gives us
exactly the set of E-admissible options.

\section{Which Is the Right One?}
\label{sec:the:right:one}

Evidently, it is hard to pinpoint the right choice. Instead, let us ask ourselves: what properties do we want
our notion of optimality to satisfy? Let us summarise a few important
guidelines.

Clearly, whatever notion of optimality, it seems reasonable to exclude
inadmissible decisions. For ease of exposition, let's assume that the
inadmissible decisions have already been removed from $D$, i.e.,
$D=\opts{\ge}{D}$; this implies in particular that
$\opts{\mathcal{M}}{D}$ gives us the set of E-admissible decisions.

Now note that, in general, the following implications
hold:
\begin{equation*}
\begin{diagram}
\node{\text{$\Gamma$-maximax}}\arrow{s}\arrow{se} \node{\text{$\Gamma$-maximin}}\arrow{s}\arrow{s} \\
\node{\text{E-admissible}}\arrow{r}\arrow{se} \node{\text{maximal}}\arrow{s}\\
\node{} \node{\text{interval dominance}}
\end{diagram}
\end{equation*}
as is also demonstrated by our example. A proof is given in the
appendix, Theorem~\ref{thm:optimality:connections}.

E-admissibility, maximality, and interval dominance have the nice
property that the more determinate our beliefs (i.e., the smaller
$\mathcal{M}$), the smaller the set of optimal decisions.  In
contradistinction, $\Gamma$-maximin and $\Gamma$-maximax lack this
property, and usually only select a single decision, even in case of
complete ignorance. However, if we are \emph{only} interested in the
most pessimistic (or most optimistic) solution, disregarding other
reasonable solutions, then $\Gamma$-maximin (or $\Gamma$-maximax)
seems appropriate. Utkin and Augustin
\cite{2005:utkin:augustin::algorithms} have collected a number of nice
algorithms for finding $\Gamma$-maximin and $\Gamma$-maximax
solutions, and even mixtures of these two. Seidenfeld
\cite{2004:seidenfeld::maximin:eadm} has compared $\Gamma$-maximin to
E-admissibility, and argued against $\Gamma$-maximin in sequential
decision problems.

If we do not settle for $\Gamma$-maximin (or $\Gamma$-maximax), should
we choose E-admissibility, maximality, or interval dominance?  As
already mentioned, interval dominance is weaker than maximality, so in
general we will end up with a larger (and arguably too large) set of
optimal options. Assuming the non-admissible decisions have been
weeded, a decision $d$ is not optimal in $D$ with respect to interval
dominance if and only if
\begin{equation}\label{eq:optimal:elimination}
  \UppNatXt{\LowPrev{P}}(J_d)<\sup_{e\in D}\LowNatXt{\LowPrev{P}}(J_e).
\end{equation}
Thus, if $D$ has $n$ elements, interval dominance requires us to
calculate $2n$ natural extensions, and make $2n$ comparisons, whereas
for maximality, by Eq.~\eqref{eq:optimal:maximality}, we must
calculate $n^2-n$ natural extensions, and perform $n^2-n$
comparisons---roughly speaking, each natural extension is a linear
program in $m$ (size of $\mathcal{X}$) variables and $r$ (size of
$\mathcal{K}$) constraints, or vice versa if we solve the dual program.
So, comparing maximality and interval
dominance, we face a tradeoff between computational speed and number
of optimal options.

However, this also means that interval dominance is a means
to speed up the calculation of maximal and E-admissible decisions:
because every maximal decision is also interval dominant, we can invoke
interval dominance as a first computationally efficient step in
eliminating non-optimal decisions, if we eventually opt for maximality
or E-admissibility. Indeed, eliminating those decisions $d$ that
satisfy Eq.~\eqref{eq:optimal:elimination}, we will also eliminate
those decisions that are neither maximal, nor E-admissible.

Regarding sequential decision problems, we note that dynamic
programming techniques cannot be used when using interval dominance
(see De Cooman and Troffaes
\cite{decooman:troffaes::dynprog:impgain}), and therefore, since
dynamic programming yields an exponential speedup, maximality and E-admissibility are
certainly preferred over interval dominance once dynamics enter the
picture.

This leaves E-admissibility and maximality. They are quite similar:
they coincide on all decision sets $D$ that contain two decisions. In
case we consider larger decision sets, they coincide if the set of
gain gambles is convex (for instance, if we consider \emph{randomised}
decisions). As already mentioned, E-admissibility is stronger than
maximality, and also has some other advantages over maximality. For
instance, $\frac{1}{5}J_2+\frac{4}{5}J_3\Psg{\LowPrev{P}}J_5$, so,
choosing decision $2$ with probability $20\%$ and decision $3$ with
probability $80\%$ is preferred to decision $5$. Therefore, we should
perhaps not consider decision $5$ as optimal.

E-admissibility is not
vulnerable to such argument, since no E-admissible decision can be
dominated by randomized decisions: if for some $\mu\in\mathcal{M}$ it holds that $\LinNatXt{\mu}(J_d-J_e)\ge 0$
for all $e\in D$, then also 
$$\LinNatXt{\mu}\left(J_d-\sum_{i=1}^n\lambda_i J_{e_i}\right)=\sum_{i=1}^n\lambda_i \LinNatXt{\mu}\left(J_d-J_{e_i}\right)\ge 0$$ 
for any convex combination $\sum_{i=1}^n\lambda_i J_{e_i}$ of gain gambles, and hence, it also holds that 
$$\LowNatXt{\LowPrev{P}}\left(\sum_{i=1}^n\lambda_i J_{e_i}-J_d\right)\le 0$$ 
which means that no convex combination $\sum_{i=1}^n\lambda_i J_{e_i}$ can dominate $J_d$ with respect to $\Psg{\LowPrev{P}}$.

A powerful algorithm for calculating E-admissible options has been
recently suggested by Utkin and Augustin
\cite[pp.~356--357]{2005:utkin:augustin::algorithms}, and independently by Kikuti, Cozman, and de Campos \cite[Sec.~3.4]{2005:cozman:decisiontrees}. If $D$ has $n$
elements, finding all (pure) E-admissible options requires us to solve
$n$ linear programs in $m$ variables and $r+n$ constraints.

As we already noted, through convexification of the decision set,
maximality and E-admissibility coincide. Utkin and Augustin's
algorithm can also cope with this case, but now one has to consider in
the worst case $n!$ linear programs, and usually several less: the worst case
only obtains if all options are E-admissible. For instance, if there
are only $\ell$ E-admissible pure options, one has to consider only at
most $\ell!+n-\ell$ of those linear programs, and again, usually less.

In conclusion, the decision criterion to settle for in a particular
application, depends at least on the goals of the decision maker (what
properties should optimality satisfy?), and possibly also on the size
and structure of the problem if computational issues arise.

\subsection*{Acknowledgements}

I especially want to thank Teddy Seidenfeld for the many instructive
discussions about maximality versus E-admissibility. I also wish to thank two anonymous referees for their helpful comments. This paper has
been supported by the Belgian American Educational Foundation. The
scientific responsibility rests with its author.

\appendix

\section{Proofs}

This appendix is dedicated to proving the connections between the
various optimality criteria, and existence results mentioned
throughout the paper. In the whole appendix, we assume the
following:

Recall, $D$ denotes some set of decisions, and every decision $d\in D$
induces a gain gamble $J_d\in\gambles$, where $\gambles$ is the set of
all gambles (bounded $\mathcal{X}$--$\SetR$ mappings).

$\LowPrev{P}$ denotes a lower prevision, defined on a subset
$\mathcal{K}$ of $\gambles$. With $\mathcal{F}$ we denote a field on
$\mathcal{X}$ such that all gain gambles $J_d$ and gambles in
$\mathcal{K}$ are measurable with respect to $\mathcal{F}$, i.e., are
a uniform limit of $\mathcal{F}$-simple gambles. $\mathcal{F}$ could
be for instance the power set of $\mathcal{X}$.

$\LowPrev{P}$ is assumed to avoid sure
loss, and $\LowNatXt{\LowPrev{P}}$ is its natural extension
to the set of all $\mathcal{F}$-measurable gambles.
$\mathcal{M}$ is the credal set representing $\LowPrev{P}$, as defined
in Section~\ref{sec:generalising:to:improb}. We will make
deliberate use of the properties of natural extension (for instance,
superadditivity:
$\LowNatXt{\LowPrev{P}}(f+g)\ge\LowNatXt{\LowPrev{P}}(f)+\LowNatXt{\LowPrev{P}}(g)$, and hence also $\LowNatXt{\LowPrev{P}}(f-g)\le\LowNatXt{\LowPrev{P}}(f)-\LowNatXt{\LowPrev{P}}(g)$).
We refer to Walley \cite[Sec.~2.6, p.~76, and Sec.~3.1.2,
p.~123]{1991:walley} for an overview and proof of these properties.

We use the symbol $\mu$ for an arbitrary finitely additive probability
measure on $\mathcal{F}$, and $\LinNatXt{\mu}$ denotes the Dunford
integral with respect to $\mu$. This integral is defined on (at least)
the set of all $\mathcal{F}$-measurable gambles.

\subsection{Connections between Decision Criteria}

\begin{thm}\label{thm:optimality:connections}
  The following relations hold.
  \begin{align*}
    &\opts{\UppNatXt{\LowPrev{P}}}{D}\subseteq\opts{\mathcal{M}}{D}\subseteq\opts{\Psg{\LowPrev{P}}}{D}\subseteq\opts{\Pintvalsg{\LowPrev{P}}}{D}\\
    &\opts{\LowNatXt{\LowPrev{P}}}{D}\subseteq\opts{\Psg{\LowPrev{P}}}{D}
  \end{align*}
\end{thm}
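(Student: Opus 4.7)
The plan is to unfold the definitions and lean on two basic facts: the duality
\[
\UppNatXt{\LowPrev{P}}(g)=\sup_{\mu\in\mathcal{M}}\LinNatXt{\mu}(g),\qquad
\LowNatXt{\LowPrev{P}}(g)=\inf_{\mu\in\mathcal{M}}\LinNatXt{\mu}(g),
\]
and superadditivity of the natural extension, from which the paper's inequality $\LowNatXt{\LowPrev{P}}(f-g)\le\LowNatXt{\LowPrev{P}}(f)-\LowNatXt{\LowPrev{P}}(g)$ and its variant
\[
\LowNatXt{\LowPrev{P}}(J_e-J_d)\ge\LowNatXt{\LowPrev{P}}(J_e)-\UppNatXt{\LowPrev{P}}(J_d)
\]
both follow (the latter by writing $J_e=(J_e-J_d)+J_d$ and using $\LowNatXt{\LowPrev{P}}(-J_d)=-\UppNatXt{\LowPrev{P}}(J_d)$). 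All four inclusions then reduce to short manipulations, since each decision set under consideration is defined as a restriction of $\opts{\ge}{D}$, so I only have to compare the optimality conditions themselves.

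For the three straightforward inclusions I would argue as follows. If $d\in\opts{\mathcal{M}}{D}$, pick $\mu\in\mathcal{M}$ witnessing E-admissibility; then $\LinNatXt{\mu}(J_e-J_d)\le 0$ for every $e\in\opts{\ge}{D}$, and taking the infimum over $\mathcal{M}$ gives $\LowNatXt{\LowPrev{P}}(J_e-J_d)\le 0$, so $d\in\opts{\Psg{\LowPrev{P}}}{D}$. If $d\in\opts{\Psg{\LowPrev{P}}}{D}$, the displayed superadditivity inequality turns $\LowNatXt{\LowPrev{P}}(J_e-J_d)\le 0$ into $\LowNatXt{\LowPrev{P}}(J_e)\le\UppNatXt{\LowPrev{P}}(J_d)$, which is exactly the interval-dominance condition of Eq.~\eqref{eq:optimal:intval}. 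Finally, if $d$ maximises $\LowNatXt{\LowPrev{P}}(J_d)$ over $\opts{\ge}{D}$, then $\LowNatXt{\LowPrev{P}}(J_e-J_d)\le\LowNatXt{\LowPrev{P}}(J_e)-\LowNatXt{\LowPrev{P}}(J_d)\le 0$ for every $e$, giving maximality.

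The main obstacle is the first inclusion $\opts{\UppNatXt{\LowPrev{P}}}{D}\subseteq\opts{\mathcal{M}}{D}$. Here I want to produce, for a $\Gamma$-maximax decision $d$, some $\mu^*\in\mathcal{M}$ with $\LinNatXt{\mu^*}(J_d)=\UppNatXt{\LowPrev{P}}(J_d)$: once I have such a $\mu^*$, the chain
\[
\LinNatXt{\mu^*}(J_e)\le\UppNatXt{\LowPrev{P}}(J_e)\le\UppNatXt{\LowPrev{P}}(J_d)=\LinNatXt{\mu^*}(J_d)
\]
valid for all $e\in\opts{\ge}{D}$ shows $d\in\opts{\LinNatXt{\mu^*}}{D}\subseteq\opts{\mathcal{M}}{D}$. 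The only nontrivial analytic ingredient is therefore attainment of the supremum defining $\UppNatXt{\LowPrev{P}}(J_d)$. This I would justify by noting that $\mathcal{M}$ is weak-$*$ closed inside the set of all finitely additive probability charges on $\mathcal{F}$, hence weak-$*$ compact by Banach--Alaoglu, while $\mu\mapsto\LinNatXt{\mu}(J_d)$ is weak-$*$ continuous on $\mathcal{F}$-measurable gambles (this is essentially the defining property of the weak-$*$ topology relative to the Dunford integral). A continuous function on a compact set attains its supremum, so $\mu^*$ exists; I would cite Walley's discussion of the envelope theorem for this standard fact.
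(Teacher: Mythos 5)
Your proposal is correct and follows essentially the same route as the paper's own proof: the lower/upper envelope representation of the natural extension together with weak-$*$ compactness of $\mathcal{M}$ (attainment of the supremum for the $\Gamma$-maximax case), the lower-envelope bound for E-admissibility implying maximality, and super-additivity with conjugacy for the remaining two inclusions. The only cosmetic point is that the inequality $\LowNatXt{\LowPrev{P}}(J_e-J_d)\ge\LowNatXt{\LowPrev{P}}(J_e)-\UppNatXt{\LowPrev{P}}(J_d)$ is obtained most directly by applying super-additivity to the decomposition $J_e-J_d=J_e+(-J_d)$ and then using $\LowNatXt{\LowPrev{P}}(-J_d)=-\UppNatXt{\LowPrev{P}}(J_d)$, rather than by decomposing $J_e$ itself.
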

\begin{proof}
  Let $\mathcal{J}=\{J_d\colon d\in D\}$.

  Suppose that $d$ is $\Gamma$-maximax in $D$: $J_d$ maximises
  $\UppNatXt{\LowPrev{P}}$ in $\max_{\ge}(\mathcal{J})$. Since
  $\UppNatXt{\LowPrev{P}}$ is the upper envelope of $\mathcal{M}$, and
  $\mathcal{M}$ is weak-* compact (see Walley
  \cite[Sec.~3.6]{1991:walley}), there is a $\mu$ in $\mathcal{M}$
  such that $\UppNatXt{\LowPrev{P}}(J_d)=\LinNatXt{\mu}(J_d)$. But,
  $\LinNatXt{\mu}(J_e)\le\UppNatXt{\LowPrev{P}}(J_e)\le\UppNatXt{\LowPrev{P}}(J_d)=\LinNatXt{\mu}(J_d)$,
  for every $J_e\in\max_{\ge}(\mathcal{J})$ because $d$ is
  $\Gamma$-maximax. Thus, $d$ belongs to $\opts{\mathcal{M}}{D}$.

  Suppose that $d\in\opts{\mathcal{M}}{D}$: there is a $\mu$ in
  $\mathcal{M}$ such that $J_d$ maximises $\LinNatXt{\mu}$ in
  $\max_{\ge}(\mathcal{J})$. But then, because
  $\LowNatXt{\LowPrev{P}}$ is the lower envelope of $\mathcal{M}$,
  $\LowNatXt{\LowPrev{P}}(J_e-J_d)\le\LinNatXt{\mu}(J_e-J_d)=\LinNatXt{\mu}(J_e)-\LinNatXt{\mu}(J_d)\le
  0$ for all $J_e$ in $\max_{\ge}(\mathcal{J})$. Hence, by
  Eq.~\eqref{eq:optimal:maximality} on
  p.~\pageref{eq:optimal:maximality}, $d$ must be maximal.

  Suppose that $d$ is maximal. Then, again by
  Eq.~\eqref{eq:optimal:maximality},
  $\LowNatXt{\LowPrev{P}}(J_e-J_d)\le 0$ for all $J_e$ in
  $\max_{\ge}(\mathcal{J})$. But,
  $\LowNatXt{\LowPrev{P}}(J_e)-\UppNatXt{\LowPrev{P}}(J_d)\le\LowNatXt{\LowPrev{P}}(J_e-J_d)$,
  hence, also
  $\LowNatXt{\LowPrev{P}}(J_e)\le\UppNatXt{\LowPrev{P}}(J_d)$ for all
  $J_e$ in $\max_{\ge}(\mathcal{J})$, which means that $d$ belongs
  to $\opts{\Pintvalsg{\LowPrev{P}}}{D}$.

  Finally, suppose that $d$ is $\Gamma$-maximin: $J_d$ maximises
  $\LowNatXt{\LowPrev{P}}$ in $\max_{\ge}(\mathcal{J})$. But then
  $\LowNatXt{\LowPrev{P}}(J_e-J_d)\le\LowNatXt{\LowPrev{P}}(J_e)-\LowNatXt{\LowPrev{P}}(J_d)\le
  0$ for all $J_e$ in $\max_{\ge}(\mathcal{J})$; $d$ must be maximal.
\end{proof}

\subsection{Existence}

We first prove a technical but very useful lemma about the existence
of optimal elements with respect to preorders; it's an abstraction of
a result proved by De Cooman and Troffaes
\cite{decooman:troffaes::dynprog:impgain}. Let's start with a few definitions.

A \emph{preorder} is simply a reflexive and transitive relation.

Let $\mathcal{V}$ be any set, and let $\trianglerighteqslant{}$ be any
preorder on $\mathcal{V}$. An element $v$ of a subset $\mathcal{S}$ of
$\mathcal{V}$ is called \emph{$\trianglerighteqslant{}$-maximal} in
$\mathcal{S}$ if, for all $w$ in $\mathcal{S}$,
$w\trianglerighteqslant{} v$ implies $v\trianglerighteqslant{} w$. The
set of $\trianglerighteqslant{}$-maximal elements is denoted by
\begin{equation}
  \label{eq:def:maximal:preorder}
  \maxs{\trianglerighteqslant{}}{\mathcal{S}}
  :=
  \Big\{
    v\in\mathcal{S}\colon
    (\forall w\in\mathcal{S})
    (w\trianglerighteqslant{} v\implies v\trianglerighteqslant{} w)
  \Big\}.
\end{equation}
For any $v$ in $\mathcal{S}$, we also define the \emph{up-set} of $v$
relative to $\mathcal{S}$ as
\begin{equation*}
  \UpSet{\trianglerighteqslant{}}{\mathcal{S}}{v}:=\{w\in\mathcal{S}\colon w\trianglerighteqslant{}v\}.
\end{equation*}

\begin{lem}
  \label{lem:preorders:maximality}
  Let $\mathcal{V}$ be a Hausdorff topological space.
  Let $\trianglerighteqslant{}$ be any preorder on $\mathcal{V}$ such that for any
  $v$ in $\mathcal{V}$, the set $\UpSet{\trianglerighteqslant{}}{\mathcal{V}}{v}$ is
  closed. Then, for any non-empty compact subset $\mathcal{S}$ of
  $\mathcal{V}$, the following statements hold.
  \begin{enumerate}[(i)]
  \item\label{lem:preorders:maximality::upset} For every $v$ in
    $\mathcal{S}$, the set $\UpSet{\trianglerighteqslant{}}{\mathcal{S}}{v}$ is
    non-empty and compact.
  \item\label{lem:preorders:maximality::existence} The set
    $\maxs{\trianglerighteqslant{}}{\mathcal{S}}$ of $\trianglerighteqslant{}$-maximal elements of
    $\mathcal{S}$ is non-empty.
  \item\label{lem:preorders:maximality::dominance} For every $v$ in
    $\mathcal{S}$, there is a $\trianglerighteqslant{}$-maximal element $w$ of
    $\mathcal{S}$ such that $w\trianglerighteqslant{}v$.
\end{enumerate}
\end{lem}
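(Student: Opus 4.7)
The plan is to dispatch (i) from reflexivity and the closed-intersect-compact principle, then to establish (iii) via Zorn's lemma on the set $\UpSet{\trianglerighteqslant{}}{\mathcal{S}}{v}$ ordered by $\trianglerighteqslant{}$, and finally to deduce (ii) as an immediate corollary by picking any point of the non-empty set $\mathcal{S}$.

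For (i) I would just note that reflexivity of $\trianglerighteqslant{}$ puts $v$ itself in $\UpSet{\trianglerighteqslant{}}{\mathcal{S}}{v}$, giving non-emptiness, and then write $\UpSet{\trianglerighteqslant{}}{\mathcal{S}}{v} = \UpSet{\trianglerighteqslant{}}{\mathcal{V}}{v} \cap \mathcal{S}$, exhibiting it as the intersection of a closed set (by hypothesis) with a compact set, hence compact.

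The substance lies in (iii). Fixing $v \in \mathcal{S}$, I will work inside $\mathcal{S}_v := \UpSet{\trianglerighteqslant{}}{\mathcal{S}}{v}$, which is non-empty and compact by (i), and apply Zorn's lemma in its preorder form (alternatively, pass to the quotient partial order obtained from the equivalence $w \sim w'$ iff $w \trianglerighteqslant{} w' \trianglerighteqslant{} w$, and transfer the conclusion back). The one non-routine step is producing an upper bound for each non-empty chain $C \subseteq \mathcal{S}_v$. Here is the key move: consider the family $\{\UpSet{\trianglerighteqslant{}}{\mathcal{S}}{c} : c \in C\}$ of closed subsets of $\mathcal{S}$; any finite subcollection contains a $\trianglerighteqslant{}$-largest indexing element from the chain, which then belongs to every member of that subcollection, so the family has the finite intersection property. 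Compactness of $\mathcal{S}$ makes the total intersection non-empty, and any element $w$ of it satisfies $w \trianglerighteqslant{} c$ for every $c \in C$; transitivity with $c \trianglerighteqslant{} v$ shows $w \in \mathcal{S}_v$, delivering the required upper bound.

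Zorn's lemma then produces a $\trianglerighteqslant{}$-maximal $w^*$ in $\mathcal{S}_v$. The one subtlety, and what I expect to be the main obstacle (modest though it is), is lifting this to maximality in all of $\mathcal{S}$, not merely in $\mathcal{S}_v$: if $u \in \mathcal{S}$ satisfies $u \trianglerighteqslant{} w^*$, then transitivity with $w^* \trianglerighteqslant{} v$ places $u$ in $\mathcal{S}_v$, whereupon maximality of $w^*$ inside $\mathcal{S}_v$ forces $w^* \trianglerighteqslant{} u$. Thus $w^* \in \maxs{\trianglerighteqslant{}}{\mathcal{S}}$ with $w^* \trianglerighteqslant{} v$, which is (iii); since $\mathcal{S}$ is non-empty, taking any $v \in \mathcal{S}$ yields (ii).
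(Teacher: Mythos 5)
Your proposal is correct and uses the same ingredients as the paper's proof: reflexivity plus the closed-set-intersect-compact-set argument for (i), the finite-intersection-property of up-sets along a chain together with compactness and the preorder form of Zorn's lemma to produce maximal elements, and the transitivity argument to lift maximality from the up-set to all of $\mathcal{S}$. The only (cosmetic) difference is the order: the paper first proves (ii) by applying Zorn to $\mathcal{S}$ itself and then derives (iii) by applying (ii) to the compact up-set, whereas you run Zorn directly inside $\UpSet{\trianglerighteqslant{}}{\mathcal{S}}{v}$ to get (iii) and obtain (ii) as a corollary.
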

\begin{proof}
  \eqref{lem:preorders:maximality::upset}. Since $\trianglerighteqslant{}$ is
  reflexive, it follows that $v\trianglerighteqslant{}v$, so
  $\UpSet{\trianglerighteqslant{}}{\mathcal{S}}{v}$ is non-empty. Is it compact?
  Clearly,
  $\UpSet{\trianglerighteqslant{}}{\mathcal{S}}{v}=\mathcal{S}\cap\UpSet{\trianglerighteqslant{}}{\mathcal{V}}{v}$,
  so $\UpSet{\trianglerighteqslant{}}{\mathcal{S}}{v}$ is the intersection of a
  compact set and a closed set, and therefore
  $\UpSet{\trianglerighteqslant{}}{\mathcal{S}}{v}$ must be compact too.

  \eqref{lem:preorders:maximality::existence}. Let $\mathcal{S}'$ be
  any subset of the non-empty compact set $\mathcal{S}$ that is
  linearly ordered with respect to $\trianglerighteqslant{}$. If we
  can show that $\mathcal{S}'$ has an upper bound in $\mathcal{S}$
  with respect to $\trianglerighteqslant{}$, then we can infer from a
  version of Zorn's lemma \cite[(AC7), p.~144]{1997:schechter} (which
  also holds for preorders) that $\mathcal{S}$ has a
  $\trianglerighteqslant{}$-maximal element. Let then
  $\{v_1,v_2,\dots,v_n\}$ be an arbitrary finite subset of
  $\mathcal{S}'$. We can assume without loss of generality that
  $v_1\trianglerighteqslant{}v_2\trianglerighteqslant{}
  \dots\trianglerighteqslant{}v_n$, and consequently
  $\UpSet{\trianglerighteqslant{}}{\mathcal{S}}{v_1}\subseteq\UpSet{\trianglerighteqslant{}}{\mathcal{S}}{v_2}
  \subseteq\dots\subseteq\UpSet{\trianglerighteqslant{}}{\mathcal{S}}{v_n}$.
  This implies that the intersection
  $\bigcap_{k=1}^n\UpSet{\trianglerighteqslant{}}{\mathcal{S}}{v_k}
  =\UpSet{\trianglerighteqslant{}}{\mathcal{S}}{v_1}$ of these up-sets
  is non-empty: the collection
  $\{\UpSet{\trianglerighteqslant{}}{\mathcal{S}}{v}\colon v\in\mathcal{S}'\}$
  of compact and hence closed ($\mathcal{V}$ is Hausdorff) subsets of
  $\mathcal{S}$ has the finite intersection property. Consequently,
  since $\mathcal{S}$ is compact, the intersection
  $\bigcap_{v\in\mathcal{S}'}\UpSet{\trianglerighteqslant{}}{\mathcal{S}}{v}$
  is non-empty as well, and this is the set of upper bounds of
  $\mathcal{S}'$ in $\mathcal{S}$ with respect to
  $\trianglerighteqslant{}$. So, by Zorn's lemma, $\mathcal{S}$ has a
  $\trianglerighteqslant{}$-maximal element:
  $\maxs{\trianglerighteqslant{}}{\mathcal{S}}$ is non-empty.

  \eqref{lem:preorders:maximality::dominance}. Combine
  \eqref{lem:preorders:maximality::upset} and
  \eqref{lem:preorders:maximality::existence} to show that the
  non-empty compact set $\UpSet{\trianglerighteqslant{}}{\mathcal{S}}{v}$ has a
  maximal element $w$ with respect to $\trianglerighteqslant{}$. It is then a
  trivial step to prove that $w$ is also $\trianglerighteqslant{}$-maximal in
  $\mathcal{S}$: we must show that for any $u$ in $\mathcal{S}$, if
  $u\trianglerighteqslant{}w$, then $w\trianglerighteqslant{}u$. But, if $u\trianglerighteqslant{}w$, then also
  $u\trianglerighteqslant{}v$ since $w\trianglerighteqslant{}v$ by construction. Hence,
  $u\in\UpSet{\trianglerighteqslant{}}{\mathcal{S}}{v}$, and since $w$ is
  $\trianglerighteqslant{}$-maximal in $\UpSet{\trianglerighteqslant{}}{\mathcal{S}}{v}$, it
  follows that $w\trianglerighteqslant{}u$.
\end{proof}

The weak topology on $\gambles$ is
simply the topology of point-wise convergence. That is, a net
$f_\alpha$ in $\gambles$ converges weakly to $f$ in $\gambles$ if
$\lim_\alpha f_\alpha(x)=f(x)$ for all $x\in\mathcal{X}$.

\begin{thm}\label{thm:existence:admissible}
  If $\mathcal{J}=\{J_d\colon d\in D\}$ is a non-empty and weakly
  compact set, then $D$ contains at least one admissible decision, and
  even more, for every decision $e$ in $D$, there is an
  admissible decision $d$ in $D$ such that $J_d\ge J_e$.
\end{thm}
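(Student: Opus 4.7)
The plan is to apply Lemma~\ref{lem:preorders:maximality} directly, taking $\mathcal{V}=\gambles$ equipped with the weak topology (i.e.\ pointwise convergence), the preorder $\trianglerighteqslant{}$ equal to pointwise dominance $\ge$, and the compact subset $\mathcal{S}=\mathcal{J}=\{J_d\colon d\in D\}$. The key observation is that the notion of admissibility defined in Eq.~\eqref{eq:optimal:pointwise} coincides, under the identification $d\leftrightarrow J_d$, with $\ge$-maximality of $J_d$ in $\mathcal{J}$: indeed, since $\ge$ is antisymmetric on gambles, $J_e\ge J_d$ together with $J_d\ge J_e$ forces $J_e=J_d$, so the two formulations match. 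Hence to find an admissible decision it suffices to find a $\ge$-maximal element of $\mathcal{J}$ and pick any preimage.

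Next I would verify the two topological hypotheses of the lemma. Hausdorffness of the weak topology on $\gambles$ is immediate, because the point-evaluation functionals $g\mapsto g(x)$ separate points. For the closedness of up-sets, write
\begin{equation*}
  \UpSet{\ge}{\gambles}{f}
  =\{g\in\gambles\colon g\ge f\}
  =\bigcap_{x\in\mathcal{X}}\{g\in\gambles\colon g(x)\ge f(x)\}.
\end{equation*}
Each set in the intersection is the preimage of the closed half-line $[f(x),+\infty)$ under the continuous (in the topology of pointwise convergence) evaluation map, hence closed; therefore $\UpSet{\ge}{\gambles}{f}$ is closed in the weak topology. Thus all hypotheses of Lemma~\ref{lem:preorders:maximality} are satisfied.

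Finally, I would read off the two conclusions of the theorem. Part~\eqref{lem:preorders:maximality::existence} of the lemma says that $\maxs{\ge}{\mathcal{J}}$ is non-empty: any $d$ with $J_d$ in this set is admissible, giving the first assertion. For the second assertion, let $e\in D$ be arbitrary and apply part~\eqref{lem:preorders:maximality::dominance} to $v=J_e\in\mathcal{J}$: there exists a $\ge$-maximal element $J_d\in\mathcal{J}$ with $J_d\ge J_e$, and the corresponding decision $d$ is admissible by the correspondence above.

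I do not anticipate any real obstacle here: the content of the argument is packaged in Lemma~\ref{lem:preorders:maximality}, and the only substantive thing to check is that pointwise dominance has weakly closed up-sets in $\gambles$, which reduces to continuity of point evaluations. The mildest point requiring care is simply to remember that admissibility is a property of decisions, not of gambles, and that different decisions may share the same gain gamble; but this is handled trivially by choosing any preimage of a $\ge$-maximal $J_d$.
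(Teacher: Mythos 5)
Your proposal is correct and follows essentially the same route as the paper: identify admissibility with $\ge$-maximality of the gain gamble in $\mathcal{J}$, verify that up-sets of $\ge$ are weakly closed, and invoke Lemma~\ref{lem:preorders:maximality}\eqref{lem:preorders:maximality::existence}--\eqref{lem:preorders:maximality::dominance}. The only (cosmetic) difference is that you prove closedness by writing the up-set as an intersection of preimages of closed half-lines under evaluation maps, whereas the paper checks it directly with a pointwise-convergent net.
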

\begin{proof}
  It is easy to derive from Eq.~\eqref{eq:optimal:pointwise} that
  \begin{equation*}
    \opts{\ge}{D}=
    \{d\in D\colon (\forall e\in D)(J_e\ge J_d\implies J_d\ge J_e)\}.
  \end{equation*}
  Hence, a decision is admissible in $D$ exactly when its gain gamble
  is $\ge$-maximal in $\mathcal{J}$. We must show that $\mathcal{J}$
  has $\ge$-maximal elements.

  By Lemma~\ref{lem:preorders:maximality}, it suffices to prove that,
  for every $f\in\gambles$, the set $\mathcal{G}_f=\{g\in\gambles\colon
  g\ge f\}$ is closed with respect to the topology of point-wise
  convergence.

  Let $g_\alpha$ be a net in $\mathcal{G}_f$, and suppose that
  $g_\alpha$ converges point-wise to $g\in\gambles$: for every
  $x\in\mathcal{X}$, $\lim_\alpha g_\alpha(x)=g(x)$.  But, since
  $g_\alpha(x)\ge f(x)$ for every $\mathcal{X}$, it must also hold
  that $g(x)=\lim_\alpha g_\alpha(x)\ge f(x)$. Hence,
  $g\in\mathcal{G}_f$. We have shown that every converging net in
  $\mathcal{G}_f$ converges to a point in $\mathcal{G}_f$. Thus,
  $\mathcal{G}_f$ is closed. This establishes the theorem.
\end{proof}

Let's now introduce a slightly stronger topology on $\gambles$. This
topology has no particular name in the literature, so let's just call
it the $\tau$-topology. It is determined by the following convergence.

\begin{defn}
  Say that a net
  $f_\alpha$ in $\gambles$ \emph{$\tau$-converges} to $f$ in $\gambles$, if
  \begin{enumerate}[(i)]
  \item $\lim_\alpha f_\alpha(x)=f(x)$ for all $x\in\mathcal{X}$ (point-wise convergence), and
  \item $\lim_\alpha\UppNatXt{\LowPrev{P}}(|f_\alpha-f|)=0$ (convergence in $\UppNatXt{\LowPrev{P}}(|\cdot|)$-norm).
  \end{enumerate}
\end{defn}

This convergence induces a topology $\tau$ on $\gambles$: it turns $\gambles$
into a locally convex topological vector space, which also happens to
be Hausdorff. A topological basis at $0$ consists for instance of the
convex sets
\begin{equation*}
  \{f\in\gambles\colon \UppNatXt{\LowPrev{P}}(|f|)<\epsilon\text{ and } f(x)<\delta(x)\},
\end{equation*}
for $\epsilon>0$, and $\delta(x)>0$ for all $x\in\mathcal{X}$. It has
more open sets and more closed sets than the weak topology, but it has
less compact sets than the weak topology. On the other hand, this
topology is weaker than the supremum norm topology, so it has fewer
open and closed sets, and more compact sets, compared to the supremum
norm topology. Note that in case $\mathcal{X}$ is finite, it reduces
to the weak topology, which is in that case also equivalent to the
supremum norm topology.

Note that $\LowNatXt{\LowPrev{P}}$, $\UppNatXt{\LowPrev{P}}$, and
$\LinNatXt{\mu}$ for all $\mu\in\mathcal{M}$, are $\tau$-continuous,
simply because
\begin{align*}
\UppNatXt{\LowPrev{P}}(|f_\alpha-f|)&\ge
|\LowNatXt{\LowPrev{P}}(f_\alpha)-\LowNatXt{\LowPrev{P}}(f)|,\\
\UppNatXt{\LowPrev{P}}(|f_\alpha-f|)&\ge
|\UppNatXt{\LowPrev{P}}(f_\alpha)-\UppNatXt{\LowPrev{P}}(f)|,\text{ and }\\
\UppNatXt{\LowPrev{P}}(|f_\alpha-f|)&\ge
|\LinNatXt{\mu}(f_\alpha)-\LinNatXt{\mu}(f)|
\end{align*}
(see Walley \cite[p.~77, Sec.~2.6.1(l)]{1991:walley}).
We will exploit this fact in the proof of the following theorem, generalising a result due to Walley \cite[p.~161, Sec.~3.9.2]{1991:walley}.

\begin{thm}\label{thm:existence}
  If $\mathcal{J}=\{J_d\colon d\in D\}$ is non-empty and compact with
  respect to the $\tau$-topology, then the following statements hold.
  \begin{enumerate}[(i)]
  \item\label{thm:existence:mu}
    $\opts{\LinNatXt{\mu}}{D}$ is non-empty for all $\mu\in\mathcal{M}$.
  \item\label{thm:existence:maximin}
    $\opts{\LowNatXt{\LowPrev{P}}}{D}$ is non-empty.
  \item\label{thm:existence:maximax}
    $\opts{\UppNatXt{\LowPrev{P}}}{D}$ is non-empty.
  \item\label{thm:existence:maximal}
    $\opts{\Psg{\LowPrev{P}}}{D}$ is non-empty.
  \item\label{thm:existence:intval}
    $\opts{\Pintvalsg{\LowPrev{P}}}{D}$ is non-empty.
  \item\label{thm:existence:eadm}
    $\opts{\mathcal{M}}{D}$ is non-empty.
  \end{enumerate}
\end{thm}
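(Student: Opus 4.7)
The overall strategy is to reduce each item to an application of Lemma~\ref{lem:preorders:maximality} with $\mathcal{V}=\gambles$ equipped with the Hausdorff $\tau$-topology and $\mathcal{S}=\mathcal{J}$, and then to patch in admissibility using Theorem~\ref{thm:existence:admissible}. The key preliminary observation is that $\tau$ is finer than the weak topology, so $\tau$-compactness of $\mathcal{J}$ implies weak compactness; hence Theorem~\ref{thm:existence:admissible} applies, giving both that $\opts{\ge}{D}$ is non-empty and that every $e\in D$ is pointwise dominated by some admissible $d\in D$.

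For \eqref{thm:existence:mu}, \eqref{thm:existence:maximin}, and \eqref{thm:existence:maximax}, each functional $\phi\in\{\LinNatXt{\mu},\LowNatXt{\LowPrev{P}},\UppNatXt{\LowPrev{P}}\}$ is $\tau$-continuous (as noted in the paragraph preceding the theorem) and monotone. I will introduce the preorder $f\trianglerighteqslant g\iff\phi(f)\ge\phi(g)$ on $\gambles$, whose up-sets are $\tau$-closed since they are preimages of $[\phi(g),\infty)$ under $\phi$. Lemma~\ref{lem:preorders:maximality} then yields some $J_{d^*}\in\mathcal{J}$ maximising $\phi$ on $\mathcal{J}$, and if $d^*$ is not admissible I replace it by an admissible $d'$ with $J_{d'}\ge J_{d^*}$; monotonicity of $\phi$ forces $\phi(J_{d'})=\phi(J_{d^*})$, so $d'$ is admissible and still maximises $\phi$.

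For \eqref{thm:existence:maximal}, I instead use the preorder $f\trianglerighteqslant g\iff\LowNatXt{\LowPrev{P}}(f-g)\ge 0$ on $\gambles$. Reflexivity is immediate from $\LowNatXt{\LowPrev{P}}(0)=0$ (since $\LowPrev{P}$ avoids sure loss), and transitivity follows from the superadditivity of $\LowNatXt{\LowPrev{P}}$. Each up-set is the preimage of $[0,\infty)$ under the $\tau$-continuous map $f\mapsto\LowNatXt{\LowPrev{P}}(f-g)$ (using continuity of translation in the topological vector space $(\gambles,\tau)$), hence $\tau$-closed. Applying Lemma~\ref{lem:preorders:maximality} gives a $\trianglerighteqslant$-maximal $J_{d^*}\in\mathcal{J}$. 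I must then verify that $d^*$ is maximal in the sense of Eq.~\eqref{eq:optimal:maximality}: if some $e\in D$ satisfied $\LowNatXt{\LowPrev{P}}(J_e-J_{d^*})>0$, superadditivity applied to $(J_e-J_{d^*})+(J_{d^*}-J_e)=0$ forces $\LowNatXt{\LowPrev{P}}(J_{d^*}-J_e)<0$, so $J_e\trianglerighteqslant J_{d^*}$ without $J_{d^*}\trianglerighteqslant J_e$, contradicting the $\trianglerighteqslant$-maximality of $J_{d^*}$. Finally, replacing $d^*$ by an admissible dominator $d'$ with $J_{d'}\ge J_{d^*}$ preserves the property, since any $e$ with $\LowNatXt{\LowPrev{P}}(J_e-J_{d'})>0$ would satisfy $\LowNatXt{\LowPrev{P}}(J_e-J_{d^*})\ge\LowNatXt{\LowPrev{P}}(J_e-J_{d'})+\LowNatXt{\LowPrev{P}}(J_{d'}-J_{d^*})>0$ by superadditivity, again a contradiction.

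Items \eqref{thm:existence:intval} and \eqref{thm:existence:eadm} then follow immediately from Theorem~\ref{thm:optimality:connections}, which provides the inclusions $\opts{\Psg{\LowPrev{P}}}{D}\subseteq\opts{\Pintvalsg{\LowPrev{P}}}{D}$ and $\opts{\UppNatXt{\LowPrev{P}}}{D}\subseteq\opts{\mathcal{M}}{D}$, so the non-emptiness from \eqref{thm:existence:maximal} and \eqref{thm:existence:maximax} transfers. The main obstacle is case \eqref{thm:existence:maximal}: Lemma~\ref{lem:preorders:maximality} only produces maximal elements for whichever preorder one chooses, so the crux is selecting a preorder whose maximal elements are included in the paper's maximal decisions, together with checking both the $\tau$-closedness of the resulting up-sets and that the final admissibility patch-up preserves maximality.
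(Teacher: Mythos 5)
Your proposal is correct, and for items \eqref{thm:existence:mu}--\eqref{thm:existence:maximax} it is essentially the paper's own argument: the preorder induced by the functional, $\tau$-closedness of up-sets via $\tau$-continuity, Lemma~\ref{lem:preorders:maximality} on the $\tau$-compact $\mathcal{J}$, and the admissibility patch through Theorem~\ref{thm:existence:admissible} (the paper phrases the patch via monotonicity of the functional exactly as you do). The only genuine divergence is item \eqref{thm:existence:maximal}: the paper treats \eqref{thm:existence:maximal}, \eqref{thm:existence:intval} and \eqref{thm:existence:eadm} in one line, as immediate consequences of \eqref{thm:existence:maximax} and the inclusion chain of Theorem~\ref{thm:optimality:connections} (equivalently, one could use \eqref{thm:existence:maximin} and $\opts{\LowNatXt{\LowPrev{P}}}{D}\subseteq\opts{\Psg{\LowPrev{P}}}{D}$), whereas you prove maximality directly with the preorder $f\trianglerighteqslant g\iff\LowNatXt{\LowPrev{P}}(f-g)\ge 0$. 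Your direct route is sound---reflexivity, transitivity via superadditivity, $\tau$-closed up-sets, and the conversion of $\trianglerighteqslant$-maximality into the condition of Eq.~\eqref{eq:optimal:maximality} all check out---and it has the small merit of not routing through the weak-* compactness of $\mathcal{M}$ used in the first inclusion of Theorem~\ref{thm:optimality:connections}; but it is redundant given that theorem, which you yourself invoke for \eqref{thm:existence:intval} and \eqref{thm:existence:eadm}. Two minor points you leave implicit and should state: in the patch for \eqref{thm:existence:maximal} you need $\LowNatXt{\LowPrev{P}}(J_{d'}-J_{d^*})\ge 0$, which follows from $J_{d'}\ge J_{d^*}$ and $\LowNatXt{\LowPrev{P}}(h)\ge\inf h$; and in \eqref{thm:existence:mu}--\eqref{thm:existence:maximax} the ``monotonicity'' of $\LinNatXt{\mu}$, $\LowNatXt{\LowPrev{P}}$, $\UppNatXt{\LowPrev{P}}$ is the same standard property of natural extension the paper appeals to, so neither is a real gap.
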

\begin{proof}
  \eqref{thm:existence:mu}. Introduce the following order
  on $\gambles$: say that $f\trianglerighteqslant{}g$ whenever
  $\LinNatXt{\mu}(f)\ge\LinNatXt{\mu}(g)$. Let's first show
  that, for all $f\in\gambles$, the set
  $\mathcal{G}_f=\{g\in\gambles\colon g\trianglerighteqslant{} f\}$ is
  $\tau$-closed.

  Let $g_\alpha$ be a net in $\mathcal{G}_f$, and suppose that
  $g_\alpha$ $\tau$-converges to $g\in\gambles$.
  Since the integral $\LinNatXt{\mu}$
  is $\tau$-continuous, it follows that
  $\LinNatXt{\mu}(g)=\lim_\alpha\LinNatXt{\mu}(g_\alpha)\ge\LinNatXt{\mu}(f)$.
  Concluding, $g$ belongs to $\mathcal{G}_f$. We have established that
  every converging net in $\mathcal{G}_f$ converges to a point in
  $\mathcal{G}_f$. Thus, $\mathcal{G}_f$ is $\tau$-closed.

  By Lemma~\ref{lem:preorders:maximality}, it follows that
  $\mathcal{J}$ has at least one $\trianglerighteqslant{}$-maximal
  element $J_e$, that is, $J_e$ maximises $\LinNatXt{\mu}$ in
  $\mathcal{J}$. Since any $\tau$-compact set is also weakly compact,
  there is a $\ge$-maximal element $J_d$ in $\mathcal{J}$ such that
  $J_d\ge J_e$, by Theorem~\ref{thm:existence:admissible}. But then,
  $\LinNatXt{\mu}(J_d)\ge\LinNatXt{\mu}(J_e)$, and hence, $J_d$ also
  maximises $\LinNatXt{\mu}$ in $\mathcal{J}$.
  Because $J_d$ is $\ge$-maximal in $\mathcal{J}$, it
  also maximises $\LinNatXt{\mu}$ in
  $\max_{\ge}(\mathcal{J})$. This establishes that $d$ belongs to
  $\opts{\LinNatXt{\mu}}{D}$: this set is non-empty.

  \eqref{thm:existence:maximin}. Introduce the following order on
  $\gambles$: say that $f\trianglerighteqslant{}g$ whenever
  $\LowNatXt{\LowPrev{P}}(f)\ge\LowNatXt{\LowPrev{P}}(g)$. Continue along the
  lines of \eqref{thm:existence:mu}, using the fact that
  $\LowNatXt{\LowPrev{P}}$ is $\tau$-continuous.

  \eqref{thm:existence:maximax}. Again along the lines of
  \eqref{thm:existence:mu}, with
  $f\trianglerighteqslant{}g$ whenever 
  $\UppNatXt{\LowPrev{P}}(f)\ge\UppNatXt{\LowPrev{P}}(g)$.

  \eqref{thm:existence:maximal}\&\eqref{thm:existence:intval}\&\eqref{thm:existence:eadm}.
  Immediate, by \eqref{thm:existence:maximax} and
  Theorem~\ref{thm:optimality:connections}.
\end{proof}

%\bibliographystyle{elsart-num}
%\bibliography{all}

\end{document}